\documentclass[a4paper,12pt,oneside]{article}

\usepackage[latin1]{inputenc}
\usepackage[english]{babel}
\usepackage{amsthm}
\usepackage{amssymb}
\usepackage{amsmath}
\usepackage{hyperref}
\usepackage{mathrsfs}
\usepackage{soul}
\usepackage{pstricks}
\usepackage[nottoc]{tocbibind}

 \usepackage{tikz}

\usepackage{graphicx}
\graphicspath{{images/}}

\usepackage{longtable,geometry}
\usepackage{color}

\usepackage{hyperref}

\geometry{a4paper, margin=1.in}
\newtheoremstyle{note}{12pt}{12pt}{}{}{\bfseries}{.}{.5em}{}
\title{\LARGE\textbf{A Denjoy counterexample for circle maps with an half-critical point}}
\author{Liviana Palmisano\\ Institute of Mathematics of PAN\\
Warsaw 00-950, Poland}
\newtheorem{theo}[equation]{Theorem}
\newtheorem{prop}{Proposition}

\newtheorem{defin}[equation]{Definition}

\newtheorem{rem}[equation]{Remark}

\numberwithin{equation}{section}
\newtheorem{lemma}[equation]{Lemma}

\newcommand{\N}{{\mathbb N}}

\newcommand{\R}{{\mathbb R}}

\renewcommand{\S}{{\mathbb S}^1}

\newcommand{\Cinf}{{{\mathcal C}^\infty}}

\newcommand{\C}{{\mathcal C}}

\usepackage{makeidx}

\providecommand{\norm}[1]{{\lVert #1 \rVert}_{{\mathcal C}^n}}

\usepackage{hyperref}

\begin{document}
\maketitle
\author
\begin{abstract}
Modifying Hall's idea in \cite{hall} we construct an example of homeomorphism of the circle which is a Denjoy counterexample (\emph{i.e.} it is not conjugate to a rotation) and is a $\Cinf$-diffeomorphism everywhere except in a flat half-critical point.
\end{abstract}
\section{Introduction}
\subsection{Notations and Definitions}
Let us denote by $\S=\mathbb R/\mathbb Z$ the unit circle .
\paragraph{Rotation.}
For each $\rho\in\R$ let $R_{\rho}:\S\mapsto\S $ be the map defined by

\begin{equation*}
R_{\rho}(\theta+\mathbb Z) = (\theta+ \rho+\mathbb Z)
\end{equation*}

which is called the {\it rotation of angle $\rho$}.\\

\paragraph{Lifts.}

   Let $\pi:\R\mapsto\S$ be the projection of the real line to the circle and $f:\S\mapsto\S$ is continuous. We call a function $F:\R\mapsto\R$ a lift of $f$ if
\begin{equation*}
\pi\circ F=f\circ\pi.
\end{equation*}
In particular there exists the unique lift $F$ of $f$ such that $F(0)\in[0,1)$.\\

A lift $F$ inherits regularity properties of $f$ e.g. continuity, differentiability, smoothness.\\

\paragraph{Standing Notations.}
In the following we refer to maps on $\S$ with minuscule letters e.g $f$. The corresponding capital letter $F$ denotes the unique lift with the property $F(0)\in [0,1)$.
\par
Abusing notation we usually identify subsets of $\S$ with corresponding subsets of $\mathbb R$.

\begin{defin}
Let $f:\S\mapsto\S$ be a continuous function, we say that $f$ has degree one if for all $x\in\R$, $F(x+1)=F(x)+1$.
\end{defin}

\begin{defin}
A continuous function $f:\S\mapsto\S$ is non-decreasing\footnote{or orientation preserving} if its lift is non-decreasing .
\end{defin}

\subsection{Rotation number}
Let $f:\mathbb{S}^1\mapsto\mathbb{S}^1$ be a non-decreasing, continuous function of degree one then the limit
\begin{equation*}
\lim_{n\to\infty}\frac{F^{n}(x)}{n}
\end{equation*}
exists for every $x\in\R$ and it is independent of $x$.\\

This limit is called the \emph{rotation number} of $f$ and will be denoted by $\rho(f)$.\\

\begin{prop}
$\rho$ is non-decreasing: if $F_1< F_2$, then $\rho(f_1)\leq\rho(f_2)$.
\end{prop}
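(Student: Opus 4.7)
The partial order $f_1 \prec f_2$ is, presumably, defined by the existence of lifts $F_1,F_2$ of $f_1,f_2$ with $F_1(x)\le F_2(x)$ for every $x\in\R$ (equivalently, $F_1\le F_2$ on $[0,1]$, since both are degree-one). My plan is to show, at the level of these lifts, that the real numbers $\tilde\rho(F_1)$ and $\tilde\rho(F_2)$ already satisfy $\tilde\rho(F_1)\le\tilde\rho(F_2)$; the inequality of rotation numbers on $\R/\Z$ then follows by choosing lifts so that $\tilde\rho(F_1)$ and $\tilde\rho(F_2)$ lie in a common period.

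The core step is a monotonicity-in-iteration argument. Fix lifts $F_1\le F_2$ with $F_1$ non-decreasing (such a lift exists by the definition of a non-decreasing circle map). I claim $F_1^n\le F_2^n$ for every $n\ge1$, which I prove by induction: if $F_1^{n-1}\le F_2^{n-1}$, then, using that $F_1$ is non-decreasing,
\begin{equation*}
F_1^n(x)=F_1\bigl(F_1^{n-1}(x)\bigr)\le F_1\bigl(F_2^{n-1}(x)\bigr)\le F_2\bigl(F_2^{n-1}(x)\bigr)=F_2^n(x).
\end{equation*}
Dividing by $n$, subtracting $x/n$, and letting $n\to\infty$, the existence of the limit defining $\tilde\rho$ (stated just above the proposition) yields $\tilde\rho(F_1)\le\tilde\rho(F_2)$.

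It remains to pass from $\tilde\rho$ to $\rho=\tilde\rho\bmod1$. Here I would simply observe that replacing $F_2$ by $F_2+k$ for $k\in\Z$ gives a lift of the same map $f_2$ with $\tilde\rho(F_2+k)=\tilde\rho(F_2)+k$; thus, if I first fix a lift $F_1$ of $f_1$ and then choose the unique lift $F_2$ of $f_2$ with $F_1\le F_2<F_1+1$ (pointwise; this is possible because $f_1\prec f_2$ and both maps have degree one, so the discrepancy lies in $[0,1)$), the inequality $\tilde\rho(F_1)\le\tilde\rho(F_2)<\tilde\rho(F_1)+1$ descends to $\rho(f_1)\le\rho(f_2)$ in $\R/\Z$.

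The only delicate point is this last mod-$1$ comparison: a general inequality $a\le b$ in $\R$ does not by itself produce an inequality in $\R/\Z$, so one must justify that the lifts can be aligned so that $\tilde\rho(F_1)$ and $\tilde\rho(F_2)$ sit in a fundamental interval of length $1$. Apart from that bookkeeping, the proof is the two-line induction above followed by a limit.
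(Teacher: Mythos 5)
The paper states this proposition without proof; it is a classical fact about rotation numbers. Your argument is the standard one and is correct in its essential content: fixing lifts with $F_1\le F_2$, the induction
\begin{equation*}
F_1^n(x)=F_1\bigl(F_1^{n-1}(x)\bigr)\le F_1\bigl(F_2^{n-1}(x)\bigr)\le F_2\bigl(F_2^{n-1}(x)\bigr)=F_2^n(x)
\end{equation*}
(using that $F_1$ is non-decreasing, then $F_1\le F_2$), followed by dividing by $n$ and passing to the limit, gives $\tilde\rho(F_1)\le\tilde\rho(F_2)$. That is the substance of the proposition.

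One small slip in your final paragraph: you assert that $F_2$ can be chosen with $F_1\le F_2<F_1+1$ ``because the discrepancy lies in $[0,1)$.'' That strict upper bound is not always achievable. Since both lifts increase by exactly $1$ over a period, the oscillation of $G=F_2-F_1$ is bounded by $1$, but it can equal $1$: take $F_1$ flat on $[0,\tfrac12]$ with slope $2$ on $[\tfrac12,1]$, and $F_2$ with slope $2$ on $[0,\tfrac12]$, flat on $[\tfrac12,1]$, both with $F_i(0)=0$; then $G(0)=0$ and $G(\tfrac12)=1$. So in general one only gets $F_1\le F_2\le F_1+1$. This is harmless for your purpose: the same induction applied to $F_2\le F_1+1$ gives $\tilde\rho(F_2)\le\tilde\rho(F_1)+1$, and the pair of inequalities $\tilde\rho(F_1)\le\tilde\rho(F_2)\le\tilde\rho(F_1)+1$ is exactly what lets the ordering descend to the circle. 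Replace ``$<$'' by ``$\le$'' and the argument is clean.
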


\begin{rem}\label{famrot1}
In particular, if ${(f_t)}_{t \in \R}$ is a family of continuous, non-decreasing functions  of degree one, such that for each $x\in \mathbb R $, $t\to F_{t}(x)$ is non-decreasing, then $t\to \rho(f_{t})$ is also non-decreasing.
\end{rem}
Moreover, at each irrational number the rotation number is increasing:
\begin{prop}
If $F_0< F_1$ and $\rho(f_0)$ is irrational, then $\rho(f_0)<\rho(f_1)$.
\end{prop}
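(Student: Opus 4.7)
The strategy is proof by contradiction. The previous proposition already gives $\rho(f_1)\ge\rho(f_0)$, so assume for contradiction that $\rho(f_1)=\rho(f_0)=\alpha$ with $\alpha\not\in\Q$, and deduce that $f_0\equiv f_1$, contradicting $f_0\prec f_1$. The key tool is Poincar\'e's classification: since $\alpha$ is irrational, there exists a continuous non-decreasing degree-one map $h:\S\to\S$, with lift $H:\R\to\R$, semi-conjugating $F_0$ to $R_\alpha$, i.e.\ $H\circ F_0=H+\alpha$.

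Next I would introduce the ``defect''
\begin{equation*}
\phi(x)=H(F_1(x))-H(x)-\alpha,
\end{equation*}
which is continuous, $\Z$-periodic, and satisfies $\phi\ge 0$ because $F_1\ge F_0$ and $H$ is non-decreasing. Telescoping yields
\begin{equation*}
\frac{1}{n}\sum_{k=0}^{n-1}\phi\bigl(F_1^k(x)\bigr)=\frac{H(F_1^n(x))-H(x)}{n}-\alpha,
\end{equation*}
and since $H-\mathrm{id}$ is periodic and continuous, hence bounded, the right-hand side tends to $\rho(F_1)-\alpha=0$ as $n\to\infty$. Because $\rho(F_1)=\alpha$ is irrational, $F_1$ is uniquely ergodic on its minimal invariant set $K_1$; let $\nu$ be the unique $F_1$-invariant probability measure. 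Birkhoff's ergodic theorem identifies the Ces\`aro average above with $\int\phi\,d\nu$, hence $\int\phi\,d\nu=0$, and $\phi\ge 0$ continuous forces $\phi\equiv 0$ on $K_1$; equivalently, $H\circ F_1=H\circ F_0$ on $K_1$.

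The last step, and the real obstacle, is to promote this ``fibre-equality'' into the global identity $f_0\equiv f_1$. Using the $F_1$-invariance of $K_1$ one sees that $H(K_1)$ is an $R_\alpha$-invariant closed subset of $\S$, hence equal to $\S$ by minimality of $R_\alpha$. When $F_0$ is conjugate to $R_\alpha$, $H$ is a homeomorphism and the equality $F_1=F_0$ on $K_1$ propagates to all of $\S$ at once. The delicate case is the Denjoy regime for $F_0$, in which $H$ collapses each wandering interval of $F_0$ to a point; here one must track how a strict inequality $F_1>F_0$ on an open interval $I$ interacts with the wandering-interval structure, using that the forward iterates $F_1^n(I)$ are pairwise disjoint and thus have total length at most $1$, to rule out compatibility with $\rho(F_1)=\alpha$ and produce the contradiction.
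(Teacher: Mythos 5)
The paper states this proposition without proof, so I am judging your argument on its own. Its target is wrong: from $\rho(f_0)=\rho(f_1)=\alpha$ irrational and $F_0\leq F_1$, $F_0\neq F_1$, one \emph{cannot} conclude $f_0\equiv f_1$. Take $F_0$ a Denjoy example with wandering interval $W$, set $W'=F_0^{-1}(W)$, and let $F_1=F_0$ off $W'$ while $F_1>F_0$ on $W'$ with $F_1(W')\subset\overline W$; then $F_1\geq F_0$, $F_1\neq F_0$, yet every orbit off the wandering set is unchanged and $\rho(F_1)=\alpha$. This also shows the proposition itself needs $\prec$ to mean strict \emph{pointwise} inequality; your phrasing (``$F_1>F_0$ on an open interval $I$'') silently adopts the weaker reading, under which the statement is false. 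Moreover, the step you yourself flag as ``the real obstacle'' is not actually carried out: the sketch that the $F_1^n(I)$ are pairwise disjoint presupposes $I$ is a wandering interval for $F_1$, which is not given. The gap is genuine.

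It can, however, be closed along your own lines once $\prec$ is read as everywhere-strict. From $\phi\equiv0$ on $K_1$ you correctly get $H\circ F_1=R_\alpha\circ H$ on $K_1$, so $H(K_1)$ is closed, $R_\alpha$-invariant, hence all of $\S$. But for $x\in K_1$ the interval $[F_0(x),F_1(x)]$ has length at least $\epsilon:=\min(F_1-F_0)>0$ and is collapsed by $H$, so $H(F_1(x))$ lies in the (countable) set of levels over which $H$ has a nondegenerate fibre; since $F_1(K_1)=K_1$, this forces $H(K_1)$ to be countable, contradicting $H(K_1)=\S$. That said, the whole route through Poincar\'e, unique ergodicity and Birkhoff is heavy machinery for a five-line fact: pick integers $p,q$ with $q>0$ and $0<q\alpha-p<\epsilon$ (possible since $\alpha\notin\Q$); then $F_0^q(x)>x+p$ for all $x$, an easy induction gives $F_1^q(x)\geq F_0^q(x)+\epsilon>x+q\alpha$ for all $x$, so by compactness $F_1^q\geq\mathrm{id}+q\alpha+\eta$ for some $\eta>0$ and $\rho(f_1)\geq\alpha+\eta/q>\alpha$.
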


\subsection{Flat half-critical points}
\label{def:demicritique}

We use the following notation for the left and right derivative of a function $f$ at $p\in\mathbb{S}^1$
\begin{eqnarray*}
	f_-'(p) = \lim_{h \to 0^-} \frac{f(p+h+\mathbb Z)-f(p)}{h}, \\
	f_+'(p) = \lim_{h \to 0^+} \frac{f(p+h+\mathbb Z)-f(p)}{h}.
\end{eqnarray*}
Iteratively we can define the left and right derivative of any order $n$.
\begin{defin}
Let $f:\S\mapsto\S$ be piecewise  $\C^{n+1}$. We say that $p\in \S$ is \emph{half-critical of order $n$} for $f$ if the left derivative of $f$ at $p$ is not zero and if all the right derivatives up to order $n$ are zero but not the derivative of order $n+1$.

A half-critical point of order $1$ is called shortly half-critical. \\
A point $p\in \S$ is said \emph{flat half-critical} for a piecewise $\Cinf$ function $f:\S\mapsto \S$ if the left derivative of $f$ at $p$ is not zero and the right derivatives of all order are zero.
\end{defin}

In this case, if $f$ is a function of the circle with a half-critical point $p$ of order $k \in \N$, piecewise $\C^n$, $\C^n$ on $\S \backslash \{p\}$, then, by slight abuse of  notation, we define:
\begin{displaymath}
\| f \|_{\C^n}=\left\|F\right\|_{\C^n}=\sup_{x\in [0,1] \atop 1\leq i\leq n}\left|\frac{d^iF}{dx^i}(x)\right|+\sup_{x\in [0,1]}\left|F(x)\right|.
\end{displaymath}

For point $p$ we abuse notation assuming that, for every $1\leq i\leq n$, $\frac{d^iF}{dx^i}(p)$ is the left derivative at $p$ of order $i$.

\begin{rem}
	We note that if $f:\S\mapsto \S$ is piecewise  $\C^{j}$ and $j \geq i$, then:
	$$ \left\|F\right\|_{C^j} \geq \left\|F\right\|_{C^i} $$

\label{rem:croissance}
\end{rem}

\subsection{Discussion and Statement of the Results}\label{DSR}
One of the main questions in the field of dynamical systems is whether a circle function is ``equivalent'' to a rotation. This means that if $f$ is a continuous function defined on the circle with rotation number $\rho$ and $R_{\rho}$ is the rotation of angle $\rho$, then there exists a continuous map $h:\mathbb S^1\mapsto\mathbb{S}^1$ such that $h\circ f=R_{\rho}\circ h$.
\par
If $h$ is strictly order preserving then we say that $f$ and $R_{\rho}$ are \emph{combinatorially equivalent} or \emph{semi-conjugate} and if $h$ is a homeomorphism then we say that $f$ and $R_{\rho}$ are \emph{topologically equivalent} or \emph{conjugate}.
\par
Poincar\'e in 1880 shows that any circle homeomorphism with irrational rotation number is combinatorially equivalent to a rotation. Denjoy in \cite{Denjoy} proves that any $\C^1$ diffeomorphism with irrational rotation number and with derivative of bounded variation is topologically equivalent to a rotation. Also in \cite{Denjoy}  Denjoy provides that the hypothesis on the
derivative is essential, in fact he gives examples of $\C^1$ diffeomorphisms with irrational rotation number which are not conjugate to a rotation. Since then examples of this kind, called Denjoy counterexamples, have been constructed by many mathematicians.
\par
In \cite{Katok} the author proves the existence of a Denjoy counterexample which is $\Cinf$ everywhere with the exception of one point which is a non-flat critical point for the function. In \cite{hall}, Hall constructs a $\Cinf$ Denjoy counterexample which has zero derivative in one or possibly two points which are flat critical points for the function.
\par
The idea of Hall is to construct the $\Cinf$ Denjoy counterexample as limit of successive perturbations of an initial continuous circle function with a flat interval. The idea is to remove in any step a portion of the flat interval containing the orbit of a initial fixed interval. This procedure generates in the limit a $\Cinf$ Denjoy counterexample with a flat critical point. If this process stops (in the case that the orbit of the initial fixed interval never enters in the flat interval) the author approximates his function with a $\Cinf$ circle endomorphism which has two critical points and without dense orbits (which is equivalent to be not conjugated to a rotation, see Lemma \ref{eq}).

\par
The first observation that we can do is that the latter case cannot happen. In fact the Denjoy counterexample produced in this case has critical points of bounded criticality and this is in contradiction with Yoccoz's result in \cite{Yo}. So Hall's construction always produces a $\Cinf$ Denjoy counterexample with only one flat critical point.
\par
In this paper we construct the Denjoy counterexample as in \cite{hall} as limit of successive perturbations of an initial circle function with a flat interval having the left boundary point of strictly positive left derivative. Modifying Hall's idea we are able to produce in the limit a Denjoy counterexample with only one flat half-critical point. More precisely:
\begin{theo}\label{conden}
Let $p$ be a point on the circle. For all irrational numbers $\rho \in[0,1)$ there exists a circle homeomorphism $f:\S\mapsto \S$ with rotation number $\rho$ which satisfies the following properties:

\begin{itemize}
	\item $f$ is piecewise $\Cinf$,
	\item $f$ is a $\Cinf$ diffeomorphism on  $\S \backslash \{p\}$,
	\item $p$ is a flat half-critical point for $f$,
	\item $f$ is a Denjoy counterexample.
\end{itemize}
\end{theo}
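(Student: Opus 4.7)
The plan is to modify Hall's construction in \cite{hall} by inserting a single asymmetric wandering interval adjacent to the designated point $p$, while keeping the rest of the Cantor-set structure that forces the rotation number to be $\rho$.

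First I would start from the standard Denjoy scaffolding. Fix an auxiliary point $x_0 \neq p$ on the circle and consider its orbit $(x_n)_{n \in \Z}$ under the rotation $R_\rho$; choose a summable sequence of positive lengths $(\ell_n)_{n \in \Z}$; blow up each $x_n$ into an open interval $I_n$ of length $\ell_n$; and reparametrize so that the complement $K$ of $\bigcup_n I_n$ is a Cantor set homeomorphic (via a monotone $h$) to $\S$ minus the orbit. On $K$ one sets $f = h^{-1} \circ R_\rho \circ h$; on each $I_n$ one defines a $\Cinf$ diffeomorphism $f_n : I_n \to I_{n+1}$ whose endpoint $k$-jets are prescribed so that $f$ extends $\Cinf$ across the boundary of every $I_n$. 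Hall's main observation is that, for a suitable choice of $(\ell_n)$ with controlled ratios $\ell_{n+1}/\ell_n \to 1$, one can indeed interpolate smoothly in all orders simultaneously, producing a $\Cinf$ Denjoy counterexample with rotation number $\rho$ and wandering interval $I_0$.

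The modification is then the following: arrange $p$ to be the left endpoint of the distinguished interval $I_0$, and replace $f_0$ by a $\Cinf$ diffeomorphism $\tilde f_0 : I_0 \to I_1$ whose Taylor jet at $p$ is identically zero (so $\tilde f_0$ is totally flat from the right at $p$), but which still matches Hall's prescribed $k$-jet at the right endpoint of $I_0$ for every $k \geq 0$. Such a $\tilde f_0$ exists by a Borel-type argument: one can always interpolate a strictly increasing $\Cinf$ function between two prescribed infinite jets at the two endpoints of an interval, for example by gluing an $\exp(-1/(x-p))$-type flat template near $p$ onto a smooth bump carrying Hall's jet at the other endpoint, and then reparametrizing to preserve monotonicity. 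All the other maps $f_n$, $n \neq 0$, are kept exactly as in Hall's construction.

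Once $\tilde f_0$ is in place, the four properties follow by inspection. The resulting $f$ is still an orientation-preserving homeomorphism with the same combinatorics, hence with rotation number $\rho$ (by Remark \ref{famrot1} and Poincar\'e semi-conjugacy), and the orbit of $I_0$ is still a wandering interval. It is $\Cinf$ inside each $I_n$ by construction, $\Cinf$ across every endpoint of $I_n$ for $n \neq 0$ by the original Hall argument, and $\Cinf$ across the right endpoint of $I_0$ because $\tilde f_0$ was forced to match Hall's jets there; hence $f$ is $\Cinf$ on $\S \setminus \{p\}$. At $p$ the right-derivatives vanish at every order by the flatness of $\tilde f_0$, while the left derivative is the derivative from outside $I_0$, which was not modified and equals the positive value dictated by Hall's construction. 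This gives exactly the flat half-critical point of Definition \ref{def:demicritique}.

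The main obstacle is not the construction of $\tilde f_0$, which is a routine smooth extension, but rather the quantitative core of Hall's lemma: choosing the lengths $(\ell_n)$ and the interior derivatives of each $f_n$ so that all higher-order derivatives converge consistently at every Cantor-set endpoint, uniformly in $n$. This relies on combinatorial estimates on how the orbit points $x_n$ distribute on $\S$ for the given irrational $\rho$. Once that step is in hand, the asymmetric perturbation at $p$ is localized to a single wandering interval and does not interact with the rest of the estimates, so the proof reduces to assembling Hall's construction with the single replacement $f_0 \mapsto \tilde f_0$.
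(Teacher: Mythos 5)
Your approach is genuinely different from the paper's: you take Hall's construction as a black box and perform a single local surgery at one boundary point of the wandering interval, whereas the paper runs an iterative perturbation scheme (Lemmas \ref{lemma:primo} and \ref{lemma:secondo}) in which the limit map is obtained from a sequence of maps each possessing a \emph{genuine flat interval} that shrinks to the point $p$. The paper's flat half-critical point sits at the boundary of the shrinking flat intervals, while the wandering interval $J_0$ is a separate interval that never enters the eventual flat region until the last moment; your $p$ instead sits at the boundary of the wandering interval itself.

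The gap in your proposal is the following. Hall's map $f$ is a $\Cinf$, non-decreasing, degree-one circle map with a wandering interval, so by Denjoy's theorem it \emph{cannot} be a $\Cinf$ diffeomorphism: its derivative must vanish at some point $q$. Since all the interior maps $f_n : I_n \to I_{n+1}$ are diffeomorphisms, that $q$ must lie in the Cantor set $K$. You only prove that your modified $f$ is ``$\Cinf$ on $\S\setminus\{p\}$,'' but the theorem requires $f$ to be a $\Cinf$ \emph{diffeomorphism} on $\S\setminus\{p\}$, i.e.\ $f'>0$ there. Your surgery on $\tilde f_0$ does nothing to remove, relocate, or restructure the critical point $q$ that Hall's construction must already contain; if $q\neq p$ then the conclusion fails. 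And if you attempt to arrange $q = p$, you run into the opposite problem: the derivative of Hall's map at $q$ would vanish from \emph{both} sides (it is a smooth critical point of a $\Cinf$ map at an interior point of the Cantor set), which contradicts the requirement $F'_-(p)>0$ in the definition of a flat half-critical point. Your paragraph ``the left derivative is the derivative from outside $I_0$ \ldots\ and equals the positive value dictated by Hall's construction'' is precisely the unjustified assertion: nothing in your setup forces the left derivative at $p$ to be positive, nor the derivative elsewhere on $K$ to be nonzero. To make the plan rigorous you would need to control Hall's critical locus explicitly, which is essentially the content the paper supplies with conditions $(3)$, $(4)$, $(5)$, $(9)$ of its inductive scheme and the appeal to the corollary of Theorem 1 in \cite{a} to rule out the case where the test interval's orbit misses $I_n$.
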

Our result complement the field of Denjoy counterexamples. It is also interesting because the function constructed is the degenerate case for upper circle maps. As such our research can be viewed as a step in developing the theory of such maps, a field which is very poorly explored with first results obtained only in \cite{Grfun}.

\section{Some Technical Lemmas}
\begin{lemma}\label{eq}
Let $f:\S\mapsto\S$  be a continuous, non-decreasing, degree one function with irrational rotation number $\rho\in\left[0,1\right)$. Then the following statements are equivalent:
\begin{enumerate}
\item $f$ is a Denjoy counterexample,
\item $f$ has not dense orbits,
\item $f$ has a wandering interval,\emph{ i.e.} there exists a non-empty set $I\subset\S$ such that, for all $n,m\in\N$, $n\neq m$, $f^n(I)\cap f^m(I)=\emptyset$,
\item there exists an interval $I\subset\S$ such that $\left|I\right|>0$ and $\left|f^n(I)\right|\rightarrow0$ for $n\rightarrow +\infty$.
 \end{enumerate}
\end{lemma}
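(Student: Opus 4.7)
The plan is to use Poincar\'e's semi-conjugacy as the common backbone: since $f$ is continuous, non-decreasing, of degree one with irrational rotation number $\rho$, there exists a continuous non-decreasing degree one surjection $h:\S\vers\S$ satisfying $h\circ f=R_{\rho}\circ h$, and $f$ is topologically conjugate to $R_{\rho}$ exactly when $h$ is a homeomorphism. For $(1) \Leftrightarrow (3)$, starting from $(1)$, the failure of $h$ to be injective yields a fibre $h^{-1}(\{y_0\})$ containing a non-trivial interval $J$; since $h\circ f^n = R_{\rho}^n\circ h$ places $f^n(J)$ inside the fibre over $R_{\rho}^n(y_0)$, and since $\rho$ irrational forces these points to be pairwise distinct, the $f^n(J)$ lie in disjoint fibres and are therefore pairwise disjoint, giving a wandering interval. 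For the converse, given a wandering interval $J$, each $h(f^n(J)) = R_{\rho}^n(h(J))$ is an interval of the common length $|h(J)|$; since the $f^n(J)$ are pairwise disjoint in $\S$ and the lift $H$ is non-decreasing with $H(x+1)=H(x)+1$, summing the oscillations over a fundamental domain gives $\sum_{n\geq 0} |h(f^n(J))| \leq H(1)-H(0) = 1$, which forces $|h(J)| = 0$, so $h$ collapses $J$ to a point and $f$ is a Denjoy counterexample.

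The equivalence $(3) \Leftrightarrow (4)$ is handled by the same type of measure estimate. Disjointness of the iterates $f^n(J)$ directly gives $\sum_n |f^n(J)| \leq 1$ and hence $|f^n(J)| \to 0$, which is $(3) \Rightarrow (4)$. For $(4) \Rightarrow (3)$ I would argue contrapositively: if $f$ has no wandering interval then $h$ is a homeomorphism and $f = h^{-1}\circ R_{\rho}\circ h$. For any interval $I$ with $|I|>0$, the interval $h(I)$ has positive length, and by compactness of the space of subintervals of $\S$ some subsequence $R_{\rho}^{n_k}(h(I))$ converges in Hausdorff distance to an interval $K$ of the same positive length; continuity of $h^{-1}$ at the two endpoints then yields $|f^{n_k}(I)| \to |h^{-1}(K)| > 0$, contradicting $(4)$.

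Finally, for $(2) \Leftrightarrow (3)$: if $J$ is a wandering interval, then no $\omega$-limit point of any orbit can lie in the open interval $J$, since otherwise two distinct iterates $f^{n_k}(x), f^{n_{k+1}}(x)$ would both lie in $J$, contradicting $f^{n_{k+1}-n_k}(J)\cap J=\emptyset$; thus every orbit closure misses $J$ except possibly at a single point and no orbit can be dense. Conversely, if some $\omega(x) \neq \S$, its complement is a non-empty union of open intervals permuted by $f$, and irrationality of $\rho$ forbids any of them being periodic, since a periodic interval would produce a periodic endpoint and hence a rational rotation number; the iterates of each component are therefore pairwise disjoint and produce a wandering interval. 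The main technical obstacle is the summation bound in $(3) \Rightarrow (1)$, where one must properly exploit the non-decreasing degree-one nature of the lift $H$ on a fundamental domain; once this estimate is secured, the other implications reduce to standard consequences of the Poincar\'e semi-conjugacy together with the non-existence of periodic points for $f$ when $\rho$ is irrational.
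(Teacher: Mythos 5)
The paper itself does not prove this lemma; it simply cites Hall \cite{hall}, page 263, so there is no internal proof to compare against. Your write-up is the standard textbook argument organised around the Poincar\'e semi-conjugacy $h\circ f=R_\rho\circ h$, and the fibre construction for $(1)\Rightarrow(3)$, the fundamental-domain length estimate forcing $|h(J)|=0$ for $(3)\Rightarrow(1)$, the disjointness bound $\sum_n|f^n(J)|\le 1$ for $(3)\Rightarrow(4)$, and the compactness-of-subintervals argument for $(4)\Rightarrow(3)$ are all sound.

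One step deserves more care: in $(2)\Rightarrow(3)$ you assert that the complement of $\omega(x)$ is a union of open intervals \emph{permuted} by $f$, and argue there are no periodic gaps. That reasoning is standard for circle \emph{homeomorphisms}, but the lemma is stated for continuous non-decreasing degree-one maps, and the maps constructed in this paper have a flat interval and are far from injective. A non-injective $f$ can collapse a gap of the minimal set onto a point of the minimal set, so the gaps of $\omega(x)$ need not be permuted, and ``a periodic gap gives a periodic endpoint'' is not the right mechanism. The fix is cheap and keeps you within the framework you already set up: prove $(2)\Rightarrow(1)$ directly (if $f$ were conjugate to $R_\rho$ by a homeomorphism, then since every orbit of $R_\rho$ is dense, so is every orbit of $f$, contradicting $(2)$), and then invoke your own $(1)\Rightarrow(3)$ argument, which only uses that a non-injective $h$ has a non-degenerate fibre and that the fibres over a dense $R_\rho$-orbit are pairwise disjoint. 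With that reroute the four implications close up cleanly and the proof is correct in the generality the lemma requires.
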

The proof of this Lemma is known and it can be found for example in \cite{hall}, pag. 263.
\begin{lemma}
	Let ${f}  : \S\mapsto \S$ be a non-decreasing, degree one and  piecewise $\Cinf$ function, for which $\frac12$ is a flat half-critical point\footnote{See Definition \ref{def:demicritique}}.\\
	We suppose that there exists an interval $I$ of the form $I = (\frac12, c)$ with $c<1$, such that: $F'(x) = 0 \Leftrightarrow  x\in I$ and such that the left derivative $F_-'(\frac12) > K> 0$.\\
	Then, $\forall n \in \N$, $\forall \delta \in (0,1)$, and for each pair of intervals $I_1 = (\frac12, a)$ and $J_1 = (b,c)$ such that $I_1\cup J_1 \subset I$ and $I_1\cap J_1 = \emptyset$, there exists a non-decreasing, degree one and piecewise $\Cinf$ function,  $\tilde{f} = \tilde{f}_{n,\sigma, \delta} : \S \mapsto \S$, for which $\frac12$ is flat half-critical point. Moreover $\tilde{f}$ satisfy the following conditions:

	\begin{enumerate}
		\item $\norm{\tilde{F} - F} < \delta$,
		\item $\rho(\tilde{f} ) = \rho(f)$,
		\item $|\tilde{F'}(x) - F'(x)| < \delta F'(x)$, $\forall x \in (0,1) \backslash \overline{I}$,		
		\item $\tilde{F} '(x) = 0 \Leftrightarrow \forall x \in I_1\cup J_1$,
		\item $\tilde{F} '_-(\frac{1}{2}) >K> 0$.
	\end{enumerate}
\label{lemma:primo}
\end{lemma}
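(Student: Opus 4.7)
The idea is a two-stage perturbation. First I construct a one-parameter family $\hat F_t$ ($t \in [0, t_0]$) of piecewise $\Cinf$ modifications of $F$ satisfying conditions (1), (3), (4), (5) for $t$ small, with the single caveat that $\rho(\hat f_t)$ may differ from $\rho(f)$. Second, I correct the rotation number by composing with a small rotation $R_{s(t)}$, where $s(t)$ is produced by the intermediate value theorem.

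For the local modification I fix a smooth nonnegative bump $\psi$ supported in $[a, b]$, strictly positive on $(a, b)$, with $\int \psi = 1$ and all derivatives vanishing at $a$ and $b$; and a smooth nonnegative $\phi$ supported in a closed subinterval $K \subset (c, 1)$ on which $F'$ is bounded below by some $c_0 > 0$, with $\int \phi = 1$ and all derivatives vanishing at the endpoints of its support. I define $H : \R \to \R$ by
\begin{equation*}
H(x) = \int_0^x \bigl(\psi(y) - \phi(y)\bigr)\,dy,
\end{equation*}
extended $\Z$-periodically (which is consistent since $\int_0^1(\psi - \phi) = 0$). Then $H$ is $\Cinf$, equals $0$ on $[0, a]$ (so on a right neighbourhood of $1/2$) and equals $1$ on $[b, c]$. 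Setting $\hat F_t := F + tH$ yields a piecewise $\Cinf$ lift of degree one whose derivative vanishes exactly on $\overline{I_1} \cup \overline{J_1}$, preserves $F_-'(1/2)$, satisfies $|\hat F_t{}' - F'| \le (t \sup\phi / c_0)\,F'$ on $K$ (and is zero elsewhere on $(0, 1) \setminus \overline{I}$), and has $\|\hat F_t - F\|_{\C^n} = O(t)$. Conditions (1), (3), (4), (5) follow for $t$ small enough.

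For the rotation-number correction I set $\tilde F_{t, s} := \hat F_t + s$, corresponding on the circle to $\tilde f_{t, s} = R_s \circ \hat f_t$. A vertical shift leaves every derivative untouched, so (3), (4), (5) persist and $\|\tilde F_{t, s} - F\|_{\C^n} \le \|\hat F_t - F\|_{\C^n} + |s|$. For each fixed $t$ the map $s \mapsto \rho(\tilde f_{t, s})$ is continuous and non-decreasing by Remark \ref{famrot1}, and tends to $\pm\infty$ as $s \to \pm\infty$, so by the intermediate value theorem there is $s(t)$ with $\rho(\tilde f_{t, s(t)}) = \rho(f)$. Uniform $C^0$-continuity of $\rho$ gives $\rho(R_s \circ \hat f_t) \to \rho(R_s \circ f)$ uniformly in $s$ as $t \to 0$, and since $\rho(f)$ is irrational the strict-monotonicity proposition (at irrational rotation number) forces $s = 0$ to be the unique small zero of $s \mapsto \rho(R_s \circ f) - \rho(f)$; hence $s(t) \to 0$ as $t \to 0$. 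Choosing $t$ so small that $\|\hat F_t - F\|_{\C^n} + |s(t)| < \delta$ and setting $\tilde f := \tilde f_{t, s(t)}$ finishes the construction. The main subtlety is this rotation-number correction, requiring one-sided IVT combined with uniform $C^0$-continuity of $\rho$; everything else is a standard bump-function bookkeeping.
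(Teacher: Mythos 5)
Your proof is correct and follows essentially the same route as the paper: perturb $F$ by the antiderivative of a pair of bump functions with cancelling integrals (one supported in $[a,b]$ to cut the flat spot, one supported where $F'$ is bounded below so monotonicity survives), then compose with a small translation found by the intermediate value theorem to restore the rotation number. The only differences are cosmetic — you parametrize the perturbation size by $t$ and shrink it at the end rather than fixing the scaling constant $\frac{\delta}{3\kappa c_n}$ upfront, and your argument that $s(t)\to 0$ via uniform $\C^0$-continuity plus strict monotonicity at irrational rotation number is more roundabout than the direct sandwich bound $|s(t)| \le \left\|\hat F_t - F\right\|_{\C^0}$ that underlies the paper's brief remark.
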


\begin{proof}
	Let $n \in \N$ and $0 < \delta < 1$.\\
	We observe that $I = (\frac12, c)$, $I_1 = (\frac12, a)$ and $J_1 = (b,c)$, with $\frac12 < a < b < c < 1$ (because $I_1 \cap J_1 = \emptyset$). The configurations of this intervals is clarified in the following figure. 

	\begin{figure}[h]
	\centering
		\begin{pspicture}(0,-0.5)(10,0.6)
		\def\obj(#1)(#2)#3#4#5{%
		\psline[arrows={(-)}](#1)(#2)%
		\uput{0.4}[-90](#1){#4}\uput{0.4}[-90](#2){#5}%
		}

		\psset{tbarsize=2mm,arrowscale=2,linewidth=0.5pt}

		\psline{|-}(0,0)(1,0)\uput{0.4}[-90](0,0){0}
		\psline(2,0)\obj(2,0)(3,0){$I_1$}{$\frac12$}{$a$} \uput{0.4}[90](2.5,0){$I_1$}
		\psline(4,0)\obj(4,0)(5,0){$J_1$}{$b$}{$c$}       \uput{0.4}[90](4.5,0){$J_1$}
		\psline(6,0)\obj(6,0)(7,0){$J_1$}{$d$}{$e$}
		\psline{-|}(8,0)(9,0)\uput{0.4}[-90](9,0){1}
		\psline{->}(5,0)(10,0)
	\end{pspicture}

	\caption{Positions of the points as in Lemma \ref{lemma:primo}}

	\end{figure}

	\par
	Let $0 < \epsilon < \max(\frac12, 1-c)$, then by properties of $F$, the neighborhood of the flat interval $(\frac12 - \epsilon, c+\epsilon)$, has two connected components on $(0,1)$ on which $F'(x) > \xi(\epsilon) > 0$. So, we choose $(d,e) \subset (c+\epsilon, 1)$.
	\par
	We define for any $x \in [0,1]$,
	$$\tilde{F} (x) = F(x) + \frac{\delta}{3 \kappa c_n} \int_0^x \left(  \phi_{a,b}(t) - \phi_{d,e}(t) \right) dt $$
	where $\phi_{a,b}$ is a bump function supported on $[a,b]$\footnote{The same notation is used for the interval $[d,e]$.}, $\phi_{a,b} > 0$ on $(a,b)$. The constant $C_{d,e} > 0$ is first chosen such that $ \phi_{d,e} < \xi \delta $, and then, the constant $C_{a,b} > 0$ is chosen
 such that:\\
	$$ \int_0^1 \left(  \phi_{a,b}(t) - \phi_{d,e}(t) \right) dt = 0 .$$

	Finally, $\kappa = \max\left( (b-a)C_{a,b}, (e-d)C_{d,e} \right)$ and $c_n = 2 \max\left( \| \phi_{a,b} \|_{{\mathcal C}^n}, \| \phi_{d,e} \|_{{\mathcal C}^n} \right) \geq 1$.\\
	\par
	We observe that $\tilde{F}(0) = F(0)$ and that
	$$\tilde{F}(1) = F(1) + \frac{\delta}{3\kappa c_n}  \int_0^1 \left(  \phi_{a,b}(t) - \phi_{d,e}(t) \right) dt  = F(1) + \frac{\delta}{3\kappa c_n} \times 0 = F(1) = 1$$
	Moreover, $\tilde{F}$ can be extended on $\R$ (since $\tilde{F}(0) = F(0)$ and $\tilde{F}(1)=F(1)$), by:
	\[
		\tilde{F} (x) = \tilde{F} ( x - \lfloor x \rfloor ) + \lfloor x \rfloor
	\]
	(where $\lfloor x \rfloor$ denote the integer part of $x$),
	so $\tilde{F} $ is projected on a well defined function $\tilde{f}  : \S\mapsto \S$ which is of degree one. It is also easy to see that $\tilde{f}$ is piecewise $\Cinf$ and that $\frac12$ is a flat half-critical point for $\tilde{f}$.
	\par
	We prove that $\tilde{F} $ satisfies the properties $(1)$-$(5)$ above:
	\begin{enumerate}
	\item  For $n = 0$~:\\
	\begin{equation*}
		\left| F(x) - \tilde{F} (x) \right| = \frac{\delta}{3\kappa c_n} \left| \int_0^x   \phi_{a,b}(t) - \phi_{d,e}(t) dt \right| \leq  \frac{\delta}{3\kappa c_n} \kappa <  \delta.
	\end{equation*}
	Since the two functions  $\phi_{a,b}, \, \phi_{d,e}$  have disjoint supports, then the integral is bounded by $\kappa$.
	For all $1 \leq k \leq n$~:\\
	\begin{eqnarray*}
	\left| \frac{d^kF}{dx^k}(x) -  \frac{d^k\tilde{F} }{dx^k}(x) \right| = \left| \frac{\delta}{3\kappa c_n}  \frac{d^{k-1}}{dx^{k-1}} \frac{d}{dx}  \int_0^x  \phi_{a,b}(t) - \phi_{d,e}(t) dt \right| <\delta.
	\end{eqnarray*}
	\item
	Since the initial perturbation $\tilde{f}$ is $\mathcal C^0$ small, it can be majored by similarly small translations in either direction and for some intermediated translation the rotation number will be equal to $\rho(f)$. We observe that the property $\left\| F - \tilde{F} \right\|_{\C^0} < \delta$ remains true (the translation don't change the norm of superior order) and the other properties remain also verified.
	\item $\tilde{F} '(x) = F'(x) + \frac{\delta}{3\kappa c_n} \left( \phi_{a,b}(x) - \phi_{d,e}(x) \right)$. \\
We observe that on $(a,b)$ we have $\phi_{a,b}>0$ and $\phi_{d,e}=0 $, so $\tilde{F}$ is non-decreasing on $(a,b)$. \\
Moreover on $(d,e)$ the function $\phi_{a,b}=0$ and $\phi_{d,e} > 0$. We also have that $\phi_{d,e} < \delta \xi$ and $F' > \xi$\footnote{This comes from the fact that $(d,e) \subset (c,1)$.}; then the fact that $F' - \phi_{d,e} > 0$  implies that $\tilde{F}$ is non-decreasing on $(d,e)$. \\
On the other intervals, we have $\tilde{F}' = F'$. This fact proves the point $(3)$ and the fact that the function is non-decreasing. Finally, we have proved the points $(3)$ and $(4)$.
	\item $\tilde{F} '_-(x) = F'_-(x)$ on $(0,a) \ni \frac12$, and $F'_-(\frac12) > K>0$.
	\end{enumerate}

\end{proof}
\begin{lemma}\label{lemma:secondo}
	Let $\tilde{f}  : \S\mapsto \S$ be a non-decreasing, degree one, piecewise $\Cinf$ function, for which $\frac12$ is a flat half-critical point.\\
	We assume that $\tilde{f} $ satisfies the following properties:
	\begin{itemize}
		\item There exist two intervals $I$ and $J$, $I = (\frac12, a)$, $J = (a',b)$, $a' > a$, $b < 1$, such that:
	$$ x \in I\cup J \Leftrightarrow \frac{d\tilde{F} }{dx}(x) = 0 $$
	and the left derivative of $\tilde{F} $ in $\frac12$ is $\tilde{F} _-'(\frac12) > K>0$.
	\end{itemize}
	Then, $\forall n \in \N$, $\forall \sigma \in (0,1)$, $\exists g = g_{n,\sigma} : \S \mapsto \S$ a non-decreasing, degree one, piecewise $\Cinf$ function for which $\frac12$ is a flat half-critical point. Moreover $g$ satisfies the following conditions:
	\begin{enumerate}
		\item $\norm{ G - \tilde{F}  } <  \sigma$,
		\item $\rho(g) = \rho(\tilde{f} )$,
		\item $| G'(x) - \tilde{F'}(x)|  <  \sigma \tilde{F'}(x)$, $\forall x \in (0,1) \backslash \overline{(\frac12,b)}$,
		\item $G'(x) = 0 \Leftrightarrow x\in I$,
		\item $G'_- (\frac12) >K> 0$,
                     \item  on some right-sided neighborhood of $a$, $g$ can be represented as $
h_{r,{n+1}}((x-a)^{n+2})$
where $h_{r}$ is a $\Cinf$-diffeomorphism on an open neighborhood of $a$.
		
	\end{enumerate}
\end{lemma}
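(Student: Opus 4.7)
The construction parallels that of Lemma~\ref{lemma:primo}, with the goal reversed: instead of splitting one flat interval into two, we now destroy the secondary flat interval $J$ while preserving $I$. As before, the perturbation will be the antiderivative of a balanced difference of two bumps, chosen so that the primitive vanishes at $0$ and at $1$ and hence descends to a circle map of degree one.

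I would first fix an interval $(d,e)\subset(b,1)$ on which $\tilde F'\geq\xi>0$ for some $\xi$; such an interval exists because $\tilde F'$ is continuous on $[b+\eta,1]$ for every small $\eta>0$ and is strictly positive there by hypothesis. Next I would choose nonnegative bumps $\phi_{a',b}$ supported on $[a',b]$ and strictly positive on $(a',b)$, and $\phi_{d,e}$ supported on $[d,e]$ and strictly positive on $(d,e)$; then scale $\phi_{d,e}$ so that $\phi_{d,e}(x)<\sigma\xi$ everywhere, and finally rescale $\phi_{a',b}$ so that $\int_0^1\phi_{a',b}=\int_0^1\phi_{d,e}$. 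Setting
\[
G(x)=\tilde F(x)+\frac{\sigma}{3\kappa c_n}\int_0^x\bigl(\phi_{a',b}(t)-\phi_{d,e}(t)\bigr)\,dt,
\]
with $\kappa$ and $c_n$ defined exactly as in the proof of Lemma~\ref{lemma:primo}, one has $G(0)=\tilde F(0)$ and $G(1)=\tilde F(1)=1$, so $G$ extends by $G(x+1)=G(x)+1$ and projects to a non-decreasing, piecewise $\Cinf$ circle map $g$ of degree one. Since the supports of both bumps are disjoint from $\{\tfrac12\}\cup I$, the point $\tfrac12$ remains flat half-critical for $g$.

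Properties (1), (4), (5) are then essentially verbatim from the corresponding computations in Lemma~\ref{lemma:primo}: the $\C^n$ bound in (1) is the same estimate; property (4) follows because $G'=\tilde F'+\frac{\sigma}{3\kappa c_n}\phi_{a',b}>0$ on $J$, $G'=\tilde F'=0$ on $I$, and $G'>0$ everywhere else (the only nontrivial point being on $(d,e)$, where $G'\geq\xi-\frac{\sigma}{3\kappa c_n}\cdot\sigma\xi>0$); and (5) is immediate since $\phi_{a',b}$ vanishes on a left neighborhood of $\tfrac12$. For (3), the key observation is that the support of $\phi_{a',b}$ lies in $\overline{(\tfrac12,b)}$, so on $(0,1)\setminus\overline{(\tfrac12,b)}$ only $\phi_{d,e}$ contributes; on $(d,e)$ we get $|G'-\tilde F'|=\frac{\sigma}{3\kappa c_n}\phi_{d,e}<\frac{\sigma^2\xi}{3\kappa c_n}<\sigma\tilde F'$, and elsewhere the difference vanishes identically.

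The only conceptually delicate point is (2): the bump perturbation may nudge the rotation number. I would handle this exactly as in Lemma~\ref{lemma:primo}, namely by noting that the whole perturbation is $\C^0$-small and so can be bracketed above and below by similarly small translations of $\tilde f$; monotonicity and continuity of the rotation number in the translation parameter (Remark~\ref{famrot1}) then supply, by an intermediate-value argument, a small constant whose addition to $G$ produces a map with $\rho(g)=\rho(\tilde f)$. Adding a constant does not affect any derivative, so properties (3)--(5) survive unchanged; one only needs to shrink the initial bump amplitude slightly so that the translation is absorbed into the $\C^0$ bound in (1). This rotation-number matching is the main nontrivial step; the rest is mechanical.
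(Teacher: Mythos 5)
Your proposal follows the paper's own proof of Lemma~\ref{lemma:secondo} almost step by step: same perturbation by a primitive of a balanced difference of bumps, same normalizing constants $\kappa$ and $c_n$ imported from Lemma~\ref{lemma:primo}, same verification of (1), (3), (5) by the Lemma~\ref{lemma:primo} estimates, and the same intermediate-value/translation argument for matching the rotation number in (2). So this is the same route.

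One small but not entirely cosmetic discrepancy: you support the left bump on $[a',b]=\overline J$, whereas the paper uses $\phi_{a,b}$ supported on $[a,b]$, i.e.\ also covering the gap $(a,a')$ between $I$ and $J$. With your choice, $\phi_{a',b}$ vanishes at $a'$, while $\tilde F'(a')=0$ as well (by continuity of $\tilde F'$ on the closure of $J$), so $G'(a')=0$ with $a'\notin I$, which contradicts condition (4). The paper's choice avoids this at $a'$ (the bump is strictly positive there), though both you and the paper leave the same untreated isolated zero at the right endpoint $b$, since the bump vanishes there and $\tilde F'(b)=0$. The clean way to kill both is to take the support to be $[a,b']$ for some $b'\in(b,d)$ strictly to the right of $J$; then $G'(a)=0$ is consistent with $\overline I$, and $G'>0$ on $(a,b']$ because either $\tilde F'>0$ or the bump is positive. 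This is a two-line fix and does not change the structure of your argument, but as written condition (4) is not quite met.
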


\begin{proof}
	This proof is really similar to the proof of Lemma \ref{lemma:primo}.\\
	Let $n \in \N$ an integer, $0< \sigma <1$ a fixed real number and $(c,d) \subset (b,1)$.\\
	We have the following configuration,
	\begin{figure}[h]
	\centering
		\begin{pspicture}(0,-0.5)(10,0.6)
		\def\obj(#1)(#2)#3#4#5{%
		\psline[arrows={(-)}](#1)(#2)%
		\uput{0.4}[-90](#1){#4}\uput{0.4}[-90](#2){#5}%
		}

		\psset{tbarsize=2mm,arrowscale=2,linewidth=0.5pt}

		\psline{|-}(0,0)(1,0)\uput{0.4}[-90](0,0){0}
		\psline(3,0)\obj(3,0)(4,0){$I$}{$\frac12$}{$a$} \uput{0.4}[90](3.5,0){$I$}
		\psline(5,0)\obj(5,0)(6,0){$J$}{$a'$}{$b$}       \uput{0.4}[90](5.5,0){$J$}
		\psline(7,0)\obj(7,0)(8,0){$J$}{$c$}{$d$}
		\psline{-|}(8,0)(9,0)\uput{0.4}[-90](9,0){1}
		\psline{->}(5,0)(10,0)
	\end{pspicture}

	\caption{Positions des points du lemme \ref{lemma:primo}}

	\end{figure}

	We use the bump function $\phi_{\cdot,\cdot}$ and its properties.\\
	In this Lemma, $(c,d)$ plays the same role as $(d,e)$ in Lemma $\ref{lemma:primo}$, the constants $\kappa$ and $c_n$ depend  on $(a,b)$ and $(c,d)$.\\
	So, on $(c,d)$ we have $\tilde{F}' > \xi$ and we choose $C_{c,d}$ such that:
	$$ \phi_{c,d} < \sigma \xi.$$
	This condition guarantees that the constructed function is non-decreasing (see the proof of Lemma \ref{lemma:primo}).\\
	As before, we choose $C_{a,b}$ in a such way that:
	$$ \int_0^1 \left(  \phi_{a,b}(t) - \phi_{c,d}(t) \right) dt = 0.$$
	We denote:
	\[
		G (x) = \tilde{F} (x) + \frac{\sigma}{3\kappa c_n} \int_0^x \left( \phi_{a,b}(t) - \phi_{c,d}(t) \right) dt.
	\]
	\par
          We observe that $G$ could be extended on $\R$ (because $G(0) = \tilde{F} (0)$ and $G(1)=\tilde{F} (1)$), by:
	\[
		 G(x) = G( x - \lfloor x \rfloor ) + \lfloor x \rfloor
	\]
	so it is projected on a well defined, degree one function $g : \S \mapsto \S$ which is piecewise $\Cinf$ and for which $\frac12$ is a flat half-critical point.
	\\
	\par
	The proof of the fact that $G$ satisfies the properties $(1)$-$(5)$ is exactly the same that the proof of the Lemma \ref{lemma:primo} for $\tilde{F}$. In order to ensure $(6)$ we observe that on a right-sided neighborhood of $a$ we can easily change the flatness of the bump function $\phi_{\cdot,\cdot}$ so it fulfills the required condition.

\end{proof}

\section{Proof of Theorem \ref{conden} }

Let $\rho\in\left[0,1\right)$ be a fixed irrational number. We chose $p = \frac12$\footnote{Without loosing generality, the proof remains the same for any $p \in \S$, changing $\frac12$ by $p$.}, which will be the flat half-critical point.

The Denjoy counterexample which we will construct  will be defined as the limit of a sequences of functions:
\begin{displaymath}
(f_n:\S\mapsto\S)_{n \in \N},
\end{displaymath}
for which there exist:
\begin{itemize}
\item an interval $J_0$ and a sequence of intervals:
\begin{displaymath}
(I_n)_{n \in \N}
\end{displaymath}
such that $$\bigcap_{i\geq 0}\overline{I_i}=\left\{\frac{1}{2}\right\},$$
\item a sequence of integers
\begin{displaymath}
1=r_0< r_1 < r_2 < \cdots< r_n
\end{displaymath}
\end{itemize}
 such that, for all $i\in\{0,1,2,\ldots,n\}$ the following conditions are satisfied:
\begin{enumerate}
\item $f_i$ is a piecewise $\Cinf$, non-decreasing, degree one map,
\item $\rho(f_i)=\rho$,
\item ${F'_i}(x)=0$ if and only if $x\in I_i$,
\item the left derivative of $f_i$, $(F_{i})'_-\left(\frac{1}{2}\right)>K>0$,
\item if $b_i$ denote the right boundary point of $I_i$, then on some right-sided neighborhood of $b_i$, $f_i$ can be represented as $
h_{r,i}((x-b_i)^{i+1})$
where $h_{r,i}$ is a $\Cinf$-diffeomorphism on an open neighborhood of $b_i$,
\item $|I_i|<\frac{1}{2} |I_{i-1}|$,
\item $\left\|F_i-F_{i-1}\right\|_{C^{i-1}}<\frac{1}{2^{i}}$,
\item $0<\left|f_i^j\left(J_0\right)\right|<\frac{1}{2^{k-1}}$, if $r_{k-1}\leq j<r_k$ for $k \in \{1,2,\ldots,i\}$,
\item $f_i^j\left(J_0\right)\cap I_i=\emptyset$, if $0\leq j<r_i$ and $f_{i}^{r_i}\left(J_0\right)\Subset I_i$,
\item $| F_i'(x) - F_{i+1}'(x) | < F_i'(x)\frac{1}{2^{i+1}}$, for all $x \in (0,1) \backslash \overline{I_i}$.

\end{enumerate}

\par
We prove the existence of such a sequence by induction.

\subsubsection{Initialization}
Construction of $f_0$.\\

We start to construct a piecewise $\Cinf$, non-decreasing, degree one map $\tilde{f}$ with the following properties:
\begin{itemize}
\item[-] $\tilde{F}'(x)=0$ if and only if $x\in(\frac12,\frac34]$,
\item[-] The left derivative of $\tilde{f}$, $(\tilde{F})_-'(\frac12)>K>0$,
\item[-] on some right-sided neighborhood of $\frac34$, $\tilde{f}$ can be represented as $
h_{r,1}((x-\frac34)^{2})$
where $h_{r,1}$ is a $\Cinf$-diffeomorphism on an open neighborhood of $\frac34$.
\end{itemize}
Now, let $f_0 = \tilde{f} + t(\rho)$, where $t$ is a real number depending on $\rho$ in the way that the rotation number of $f_0$ is $\rho$, see Remark \ref{famrot1}.\\
We denote $I_0 = (\frac12,\frac34]$. Since $f_0$ is a diffeomorphism on $[0,1] \backslash\overline{I_0}$ in its image, we take $J_0$ as any subinterval of $f_0^{-1} ( I_0 )$.\\

So $f_0$ is a piecewise $\Cinf$, non-decreasing, degree one map and it has rotation number $\rho$.
\par
The conditions $(1)$-$(10)$ are also satisfied by $f_0$.\\
\par
\subsubsection{Induction}

We assume now that $f_n$ is constructed. We construct $f_{n+1}$ by perturbing $f_n$ in the way that the conditions $(1)$-$(10)$ are still satisfied.
\par
We divide the interval $I_n$ into two subintervals $I_{n+1}$ and $J_{n+1}$ such that:
\begin{itemize}
\item $I_{n+1}$ is of the form $I_{n+1} = (\frac12,\cdot)$,
\item $I_{n+1}\cup J_{n+1} \subset I_n$,
\item $I_{n+1}\cap J_{n+1}=\emptyset$,
\item $f_n^{r_n}(J_0)\subset J_{n+1} $.
\end{itemize}

\vspace{.5cm}
First step\\
\par

We apply Lemma \ref{lemma:primo} and we get a non-decreasing, degree one and piecewise $\Cinf$ function $\tilde{f}_{n,\delta}$, which satisfies to the following conditions:
\begin{itemize}
\item$\norm{\tilde{F}_{n,\delta} - F_n} < \frac{\delta}{2^{n+2}}$,
\item$|\tilde{F'}_{n,\delta} - F'_n| < F_n'(x) \frac{\delta}{2^{n+1}}$, $\forall x \in(0,1)\setminus \overline{I_n}$,
\item $\rho(\tilde{f}_{n,\delta}) = \rho(f_n)$,
\item $\tilde{F}_{n,\delta'}(x) = 0 \Leftrightarrow x \in I_{n+1} \cup J_{n+1}$,
\item the left derivative $\tilde{F}_{n,\delta_-}'(\frac{1}{2})>K>0$.
\end{itemize}
Since, by construction, $\tilde{F}_{n,\delta}^i\rightarrow F_n^i$ for $\delta\rightarrow 0$ uniformly for $i\in\{1,2,\ldots,r_n\}$, then we can fix $\delta'<\delta<1$, such that:
\begin{displaymath}
\left|\tilde{f}_{n,\delta'}^j\left(J_0\right)\right|<\frac{1}{2^{k-1}}\textrm{ for all } r_{k-1}\leq j <r_k, \textrm{  } k\in\{1,2,\ldots,n\},
\end{displaymath}

\begin{equation}\label{paura1}
\tilde{f}_{n,\delta'}^j\left(J_0\right)\cap I_n=\emptyset\textrm{ for all }0\leq j<r_n
\end{equation}
and
\begin{equation}\label{paura}
\tilde{f}_{n,\delta'}^{r_n}\left(J_0\right)\subset J_{n+1}.
\end{equation}

We study now the orbit of $J_{n+1}$ under $\tilde{f}_{n,\delta'}$.\\
We could have two different cases:

\begin{itemize}
\item[$\ast$] there exists $m>0$ such that $\tilde{f}_{n,\delta'}^m(J_{n+1}) \in I_n$,
\item[$\ast$] for all $m>0$, $\tilde{f}_{n,\delta'}^m(J_{n+1}) \notin I_n$.
\end{itemize}

Observe that the second situation never occurs. We can construct $g:\S\mapsto \S$ being non-decreasing function of degree one, which is equal to $\tilde{f}_{n,\delta'}$ everywhere except on $I_n\setminus J_{n+1}$. Moreover, $g$ can be chosen so that to the left of the right boundary point of $I_n\setminus J_{n+1}$, $a_{n}$ it is equal to $h_{l,n}((a_{n}-x)^{{n+1}})$ for some $\Cinf$-diffeomorphism $h_{l,{n}}$ such that $h_{l,n}'(0) \neq 0$. Such $g$ belongs to the class of functions with a flat interval (being $J_{n+1}$ in this case) studied in \cite{a} (see property (5)). We notice that $g^m(J_{n+1}) = \tilde{f}_{n,\delta'}^m(J_{n+1})$ thus the orbits of $g$ are not dense. By Lemma \ref{eq} it has a wandering interval and therefore contradicts Corollary to Theorem $1$ in \cite{a}.



We study now the first case: we may assume that $m$ is the smallest positive integer such that $\tilde{f}_{n,\delta'}^m(J_{n+1})\in I_n$. We shall prove that there exists $\delta''$ (smaller than $\delta'$ if necessary) such that $\tilde{f}_{n,\delta''}^m(J_{n+1})$ is in the interior of $ I_{n+1}$.

The functions $\tilde{f}_{n,\delta}$ approximate $f_n$, in particular
\[
	      \tilde{f}_{n,\delta}^m(J_{n+1}) \to f_n^m(I_n),
\]
as $\delta\searrow 0$. Moreover $\delta \mapsto \tilde{f}_{n,\delta}^m(J_{n+1})$ is continuous, consequently the set
\[
	A=\{\tilde{f}_{n,\delta}^m(J_{n+1}):\delta \in [0,\delta']\},
\]
is an interval (of $\S$) containing $\tilde{f}_{n,\delta}^m(J_{n+1})$ and $f_n^m(I_n)$. The proof is concluded once we observe that, because of the fact that the rotation number $\rho$ is irrational, $A \cap J_{n+1} = \emptyset$, so $A$ covers a portion of the interior of $I_{n+1}$.\\

Second step.\\

By Lemma \ref{lemma:secondo} for all $\sigma > 0$, there exists $f_{n+1,\sigma}:\S\mapsto\S$, piecewise $\Cinf$, non-decreasing, degree one map such that:
\begin{itemize}
\item $\norm{ F_{n+1,\sigma} - \tilde{F}_n } <  \frac{\sigma}{2^{n+2}}$,
\item $|\tilde{F'}_{n+1,\sigma} - F'_n| < F_n'(x) \frac{\sigma}{2^{n+1}}$, $\forall x \in(0,1)\setminus\overline{I_{n+1}}$,
\item $\rho(f_{n+1,\sigma})=\rho$,
\item $F_{n+1,\sigma}'(x)=0$ if and only if $x\in I_{n+1}$,
\item $\left(F_{n+1,\sigma}\right)'_- (\frac12) > K> 0$,
\item if $b_{n+1}$ denote the right boundary point of $I_{n+1}$, then on some right-sided neighborhood of $b_{n+1}$, $f_{n+1}$ can be represented as $
h_{r,{n+1}}((x-b_{n+1})^{n+2})$
where $h_{r,{n+1}}$ is a $\Cinf$-diffeomorphism on an open neighborhood of $b_{n+1}$.
\end{itemize}

Observe that, since $f^m_{n+1,\sigma}\to \tilde{f}^m_n$ for $\sigma\to 0$
\begin{equation}\label{eq:sei}
f_{n+1,\sigma}^{m} (J_{n+1}) \Subset I_{n+1}
\end{equation}

Finally we denote $r_{n+1}=r_n+m$. Since $f_{n+1,\sigma}^i\rightarrow \tilde{f}_{n}^i$ uniformly for all $i\in\{1,2,\ldots,r_n+m\}$ and since $\tilde{f}_{n}^i\left(J_{0}\right)$ is a singleton for $i>r_n$, then  we can affirm that (we set $f_{n+1}=f_{n+1,\sigma}$):
\begin{displaymath}
\left|f_{n+1}^j\left(J_{0}\right)\right|<\frac{1}{2^{k-1}}
\end{displaymath}
for all $r_{k-1}\leq j<r_k$, with $k\in\{1,2,\ldots,n\}$, and
\begin{displaymath}
\left|f_{n+1}^j\left(J_{0}\right)\right|<\frac{1}{2^n}
\end{displaymath}
if $r_n\leq j< r_{n+1}$.\\
By (\ref{eq:sei}), (\ref{paura1}),  (\ref{paura}) and since $f_{n+1,\sigma}^i\rightarrow \tilde{f}_n^i$ uniformly for all $i\in\{1,2,\ldots,r_{n+1}\}$, then
\begin{displaymath}
f_{n+1}^i\left(J_{0}\right)\cap I_{n+1}=\emptyset\textrm{ for all }0\leq i<r_{n+1}
\end{displaymath}
and

\begin{displaymath}
f_{n+1}^{r_{n+1}}\left(J_{0}\right) \Subset I_{n+1}.
\end{displaymath}

\par
So we have constructed a sequence of functions $(f_n)_{n \in \N}$ satisfying conditions $(1)$-$(10)$ for all $n\geq 1$. \\
We show that for $n \in \N$, the sequence $(f_k)$ converges in the sense of the norm $\|\cdot\|_{\C^n}$ (see Remark \ref{def:demicritique}).
The sequence $(f_k)$ is a Cauchy sequence. In fact, let $n \in \N$ fixed. By point $(7)$, for all $i\in\N^\star$,
$$ \left\|F_i-F_{i-1}\right\|_{C^{i-1}}<\frac{1}{2^{i}} $$
So by Remark \ref{rem:croissance},
$$ \left\|F_i-F_{i-1}\right\|_{C^n} \leq \left\|F_i-F_{i-1}\right\|_{C^{i-1}}<\frac{1}{2^{i}}, \text{ for all } i > n $$
where, for $i > n$, fixed, and for all $p > q > n$ we have:
$$ \left\|F_p-F_q\right\|_{C^n}<\sum_{k=q-1}^{p-1}\frac{1}{2^{k}} < \sum_{k=q-1}^{+\infty}\frac{1}{2^{k}} = \frac{1}{2^{q-2}}$$

So, for all $n \in \N$, the sequence $(f_k)$ converges in the sense of the norm $\| \cdot \|_{\C^n}$ to a piecewise $\C^n$, non-decreasing, degree one circle map $f$ which has rotation number $\rho$  (see points $(1)$ and $(2)$).
\par

By point $(4)$ the left derivative of $f$ at $\frac12$ is not zero and by point $(3)$ the right derivatives of all orders are zero.

So, $\frac12$ is a flat half-critical point for $f$.

The conditions $(3)$ and $(10)$ guarantee us that $f$ has not other critical points on $\S \backslash \{ \frac12 \}$.

\par
To conclude, by conditions $(8)$ and $(9)$,
\begin{displaymath}
\left|f_n^i\left(J_0\right)\right|\rightarrow 0\textrm{ for }i\rightarrow+\infty
\end{displaymath}
uniformly in $n$, and then
\begin{displaymath}
\left|f^i\left(J_0\right)\right|\rightarrow 0\textrm{ if }i\rightarrow+\infty.
\end{displaymath}
By Lemma \ref{eq} $f$ has a wandering interval and it is not conjugated to a rotation.

\subsection*{Acknowledgments}
I would sincerely thank Prof. J. Graczyk for introducing me to the initial subject of this paper, for his valuable advice on its drafting and for his continuous encouragement. I am also very grateful to the referee for his constructive comments.The paper was partially supported by funds allocated to the implementation of the international co-funded project in the years 2014-2018, 3038/7.PR/2014/2, and by the EU grant PCOFUND-GA-2012-600415.

\end{document}